\newcommand{\intW} {{\mathaccent23{W}}}
\newcommand{\intU} {{\mathaccent23{U}}}
\def \col {\operatorname{\textsf{c}}}
\def \Hom {\operatorname{Hom}}
\def \Desc {\operatorname{Desc}}
\def \inv  {^{-1}}
\def \g {{\mathfrak g}}
\def \fg {{\mathfrak g}}
\def \Z {{\mathbb Z}}
\def \R {{\mathbb R}}
\def \C {{\mathbb C}}
\def \calA {{\mathcal A}}
\def \calB {{\mathcal B}}
\def \calU {{\mathcal U}}
\def \calT {{\mathcal T}}
\def \calU {{\mathcal U}}
\def \eps {\epsilon}
\numberwithin{equation}{section}
\newtheorem {lemma}[equation]       {Lemma}
\newtheorem {theorem}[equation]     {Theorem}
\newtheorem {proposition}[equation] {Proposition}
\theoremstyle{definition}
\newtheorem {Definition}[equation]  {Definition}
\newtheorem {definition}[equation]  {Definition}
\theoremstyle{remark}
\newtheorem {remark}[equation]      {Remark}
\newtheorem {notation}[equation]      {Notation}
\newtheorem {empt}[equation]      {}
\newtheorem {Annoying Remark}[equation]      {Annoying Remark}
\newtheorem* {Example*}          {Example}
\newtheorem*{Remark*}            {Remark}
\newcommand{\STM}{\mathsf{STM}}
\newcommand{\STB}{\mathsf{STB}}
\newcommand{\Op}{\textsf{Op}}
\newcommand{\s}{\mathsf{s}}
\begin{document}

\title{Categories of symplectic toric manifolds as Picard stack  torsors}

\author{Eugene Lerman}
\address{Department of Mathematics, The University of Illinois
at Urbana-Champaign, 1409 W.~Green Street, Urbana, IL 61801, U.S.A.}
\email{lerman()math()uiuc()edu}

\thanks{This research is partially supported by the  National
Science Foundation}

\begin{abstract}
We outline a proof that the stack of symplectic toric $G$-manifolds
over a fixed orbit space $W$ is a torsor for the stack of symplectic
toric $G$-principal bundles over $W$.

\end{abstract}

\keywords{symplectic toric manifold, Picard stack }

\maketitle

\tableofcontents
\section{Introduction}
\label{sec:intro}
In an influential paper \cite{De} Delzant classified compact
symplectic toric manifolds.  Recently, using some ideas from \cite{L},
Karshon and I \cite{KL} extended the classification to
\emph{non-compact} symplectic toric manifolds: If $(M,\omega)$ is a
symplectic manifold with a completely integrable action of a torus $G$
and an associated moment map $\mu:M\to \fg^*$, then $W=M/G$ is
naturally a manifold with corners \cite{DH}. Furthermore the orbital moment map
$\bar{\mu}:W\to \fg^*$ induced by $\mu$ is locally an embedding that
maps the corners of $W$ to unimodular cones in $\fg^*$. That is,
$\bar{\mu}$ is a \emph{unimodular local embedding}.  

It was easy to classify the isomorphism classes of symplectic toric
manifolds $(M',\omega', \mu':M\to \fg^*)$ with orbit space $W$ and
orbital moment map $\overline{\mu'} = \bar{\mu}$.  These classes are
in bijective correspondence with degree 2-cohomology classes of $W$
with coefficients in $\Z_G\times \R$, where $\Z_G$ denotes the
integral lattice of the torus $G$.

However, showing that given a unimodular local embedding $\psi:W\to
\fg^*$ there exists a symplectic 
toric manifold $(M,\omega, \mu)$ with $M/G= W$ and $\bar{\mu} = \psi$
turned out to be hard.  We dealt with this problem by defining the
category $\STB_\psi (W)$ of symplectic toric $G$-principal bundles
(with corners) over $W$ and constructing a functor 
\[
\col: \STB_\psi (W) \to \STM_\psi (W)
\]
($\col$ for ``collapse'' or ``cut''), where $\STM_\psi (W)$ denotes
the category of symplectic toric manifolds with orbit space $W$ and
orbital moment map $\psi$.  Since $\STB_\psi(W)$ contains the pullback
of the symplectic toric principal $G$-bundle $T^*G\to \fg^*$ by
$\psi:W\to \fg^*$, this proved the $\STM_\psi (W)$ was non-empty as
well.  In this paper I clarify the relation between the two
categories, the categories of toric principal bundles with corners and of
toric manifolds.  The idea is to use the language of stacks.  Observe
that
\begin{itemize}
\item $\STB_\psi (W)$ and $\STM_\psi (W)$ are groupoids.
\item For every open set $U\subset W$ we have groupoids  $\STB_\psi (U)$
and $\STM_\psi (U)$; for an inclusion of two open sets
$V\hookrightarrow U$ we have obvious restriction functors $|_V:
\STB_\psi (U) \to \STB_\psi (V)$ and $|_V: \STM_\psi (U) \to \STM_\psi
(V)$.  Thus we have two (strict!) presheaves $\STM_\psi$ and
$\STB_\psi$ of groupoids on the category $\Op (W)$ of open subsets of
$W$: $U\mapsto \STB_\psi (U)$ and $U\mapsto \STM_\psi (U)$.
\item The two presheaves $\STB_\psi$ and $\STM_\psi$ are actually stacks.
\end{itemize}
The main result of the paper is the following theorem.

\begin{theorem} Let $G$ be a torus and $\psi:W\to \fg^*$ a unimodular
local embedding.  Then
\begin{itemize}
\item The stack $\STB_\psi$ of symplectic toric principal $G$-bundles over
$W$ is a Picard stack.
\item There is an action of $\STB_\psi $ on the stack $\STM_\psi$ of
symplectic toric manifolds over $\psi:W\to \fg^*$ making $\STM_\psi$
into a $\STB_\psi$-torsor.  
\item In particular each choice of a global
object of $\STM_\psi$ defines an isomorphism of stacks $\STB_\psi$ and
$\STM_\psi$.
\end{itemize}
\end{theorem}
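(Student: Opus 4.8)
The plan is to treat the three bullets in order, to derive the third formally from the first two, and to reduce everything to local statements governed by the classification of \cite{KL}.

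For the first bullet I would produce the commutative-group structure on $\STB_\psi$ explicitly. Given two symplectic toric principal $G$-bundles $P_1, P_2$ over an open $U \subset W$, I form the fiber product $P_1 \times_U P_2$ (a principal $G \times G$-bundle) and quotient by the anti-diagonal copy of $G$; the two $G$-invariant symplectic forms and moment maps add, yielding a new symplectic toric principal $G$-bundle $P_1 \otimes P_2$ over $\psi|_U$. The unit object is the restriction of the pullback $\psi^*(T^*G \to \fg^*)$, and the inverse of $P$ is its conjugate (the same bundle with the $G$-action and the form negated). These operations are manifestly compatible with restriction to smaller opens, so they assemble into morphisms of stacks $\STB_\psi \times \STB_\psi \to \STB_\psi$; the remaining task is the bookkeeping of the associativity, unit, and commutativity $2$-isomorphisms and their coherence (pentagon and hexagon), together with the observation that every object is invertible. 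This exhibits $\STB_\psi$ as a Picard stack.

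For the second bullet I would define the action of $\STB_\psi$ on $\STM_\psi$ by the same twisted-product recipe: for $P \in \STB_\psi(U)$ and $(M,\omega,\mu) \in \STM_\psi(U)$ set $P \cdot M := (P \times_U M)/G$, the quotient by the anti-diagonal $G$-action, which is again a symplectic toric manifold with orbital moment map $\psi|_U$ (the non-free orbits of $M$ over the corners persist, while $P$ contributes a free action). The compatibility $\col(P \otimes Q) \cong P \cdot \col(Q)$ makes $\col$ equivariant, and this is what I would use to verify local non-emptiness of $\STM_\psi$: over a suitable cover of $W$ the trivial bundle (equivalently the local pullback of $T^*G$) lies in $\STB_\psi(U)$, and its collapse is an object of $\STM_\psi(U)$. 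The heart of the matter, and the step I expect to be the main obstacle, is the pseudo-torsor condition, namely that the shearing morphism
\[
\STB_\psi \times \STM_\psi \longrightarrow \STM_\psi \times \STM_\psi, \qquad (P, M) \mapsto (P \cdot M,\, M),
\]
is an equivalence of stacks. Since equivalence of stacks is a local condition, I would check that it is fully faithful and locally essentially surjective over a cover of $W$. Local essential surjectivity amounts to the claim that, given two objects $M, M' \in \STM_\psi(U)$, there is locally on $U$ a symplectic toric principal bundle $P$ with $P \cdot M \cong M'$; over the interior $\intW$ this $P$ is simply the difference of the two (there free) $G$-actions, and the real work is to show that it extends across the corners as an object of $\STB_\psi$ and is determined canonically, so that the assignment $(M,M') \mapsto P =: \underline{\Hom}(M,M')$ glues to an inverse morphism. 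Full faithfulness reduces to matching the automorphism sheaves of objects on the two sides—both are the sheaf of gauge transformations (smooth maps into $G$) and are carried isomorphically by the action—and this, together with the set-level fact from \cite{KL} that isomorphism classes of toric manifolds over $\psi$ form a torsor under isomorphism classes of toric principal bundles (both computed by $H^2(W; \Z_G \times \R)$), is what forces the shear to be an equivalence.

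Finally, the third bullet is a formal consequence: a global object $M_0 \in \STM_\psi(W)$ determines a morphism of stacks $\STB_\psi \to \STM_\psi$, $P \mapsto P \cdot M_0$, and restricting the shearing equivalence to the slice where the second coordinate is $M_0$ shows this morphism is an equivalence of $\STB_\psi$-stacks. The only genuinely hard point remains the canonical construction and gluing of the difference bundle $\underline{\Hom}(M,M')$ across the corners of $W$; once that is in place, the rest is coherence bookkeeping and diagram-chasing.
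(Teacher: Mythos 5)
Your global architecture matches the paper's: the same $\otimes$ on $\STB_\psi$ (fiber product modulo the anti-diagonal $G$), the same twisted product defining the action, and the same reduction of the shearing morphism to the statement that each $F_M\colon (P,\sigma)\mapsto (P,\sigma)*(M,\omega)$ is an equivalence, checked locally over a good cover. But the step you yourself flag as ``the only genuinely hard point'' --- constructing the difference bundle $\underline{\Hom}(M,M')$ over the interior and extending it, canonically, across the corners of $W$ --- is precisely the step you have not supplied, and it is not coherence bookkeeping: extending principal $G$-bundles with compatible symplectic data over the corner strata is exactly the kind of direct construction this circle of papers is designed to avoid. The paper's route to local essential surjectivity is different and much softer: it works over contractible open sets $U$, where $H^2(U;\Z_G\times\R)=0$, so by the classification theorem of \cite{KL} \emph{all} objects of $\STM_\psi(U)$ are isomorphic; since $\STB_\psi(U)$ is nonempty, $F_M(U)$ is essentially surjective with no difference bundle ever being built. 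Without either carrying out your corner extension or substituting this cohomological argument, the proof of the torsor property is incomplete.

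Your treatment of full faithfulness (``both automorphism sheaves are gauge transformations and are carried isomorphically'') also compresses two genuine steps. Fullness uses the Haefliger--Salem theorem that every automorphism of a symplectic toric manifold over $U$ has the form $x\mapsto h(\varpi(x))\cdot x$ for a smooth $h\colon U\to G$, and the induced gauge transformation $\tilde f$ of $P$ must then be shown to be \emph{symplectic}; the paper does this by passing to the dense interior $\intU$, where a Lagrangian section of $M|_{\intU}$ identifies $F_M(\intU)$ with the identity functor (Proposition~\ref{8-13-1}), and concluding by density (Lemma~\ref{8-13-2}). The same mechanism gives faithfulness. This is also why the good cover must be chosen so that the interiors $\intU_i$, $\intU_{ij}$, $\intU_{ijk}$ are contractible in addition to the sets themselves --- a condition your sketch omits and which the paper arranges by taking open stars of a triangulation compatible with the stratification of $W$ by corners.
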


\subsection*{Acknowledgments}  I thank Yael Karshon and Anton Malkin 
for our fruitful collaborations without which this paper won't be
possible.

\section{Definitions, notation, conventions}

\noindent
\textbf{Notation and conventions.}
Given a category $\mathsf{A}$ we write $A\in \mathsf{A}$ to indicate
that $A$ is an object of $\mathsf{A}$.  Given a functor
$F:\mathsf{A}\to \mathsf{B}$ we will often describe it by only
indicating what it does on objects.

A \emph{torus} is a compact connected abelian Lie group.  We denote
the Lie algebra of a torus $G$ by $\g$, the dual of the Lie algebra,
$\text{Hom}(\fg,\R)$, by $\g^*$ and the integral lattice, $\ker (\exp
\colon \g \to G)$, by $\Z_G$.    When a torus $G$
acts on a manifold $M$, we denote the action of an element $g \in G$
by $m \mapsto g \cdot m$ and the vector field induced by a Lie algebra
element $\xi \in \g$ by $\xi_M$; by definition, $\xi_M (m) =
\left. \frac{d}{dt}\right|_{t=0} (\exp (t\xi)\cdot m)$.  We write the
canonical pairing between $\fg^*$ and $\fg$ as $\langle \cdot,
\cdot\rangle$.  Our convention for a moment map $\mu \colon M \to
\fg^*$ for a Hamiltonian action of a torus $G$ on a symplectic
manifold $(M, \omega)$ is that it satisfies
\[
d \langle \mu, \xi \rangle = - \omega (\xi_M, \cdot) .
\]
The moment map $\mu$  $G$-invariant; we call the induced map
$\bar{\mu}: M/G \to \fg^*$ the \emph{orbital moment map}.
We say that the triple
$(M, \omega, \mu \colon M \to \fg^*)$ is a \emph{symplectic toric $G$-manifold}
if the action of $G$ on $M$ is effective and if
\[
\dim M = 2 \dim G.
\]
Given a principal $G$-bundle $P$ we write the action of $G$ on $P$ as
a \emph{left} action.  There is no problem with that since $G$ is
abelian.

We denote the positive orthant $\{x\in \R^k \mid x_i\geq 0, 1\leq
i\leq k\}$ by $\R_+^k$.

\begin{definition} Let $\fg^*$ denote the dual of the
Lie algebra of the torus $G$.  A \emph{unimodular cone} in $\g^*$ is a
subset $C$ of $\g^*$ of the form
\begin{equation}\label{def:unimod-cone}
C= \{ \eta \in \fg^* \mid \langle \eta - \eps , v_i \rangle \geq 0
\text{ for all }\,1 \leq i \leq k\},
\end{equation}
where $\eps$ is a point in $\fg^*$ and $\{v_1, \ldots, v_k\}$ is a basis
of the integral lattice of a subtorus $K$ of $G$.
\end{definition}

\begin{empt}
The set $C=\fg^*$ is a unimodular cone with $k=0$,
with $\{v_1,\ldots, v_k\} = \emptyset$, and with $K$ the trivial subgroup
$\{1\}$.
\end{empt}

\begin{Definition} \label{def:unimodular emb}
Let $W$ be a manifold
with corners and $\fg^*$ the dual of the Lie algebra of a torus $G$.
A smooth map $\psi: W \to \g^*$ is a
\emph{unimodular local embedding} if for each point $x$ in $W$ there exists a
unimodular cone $C \subset \g^*$ and open sets $\calT \subset W$
and $\calU \subset \fg^*$ such that $\psi(\calT)= C \cap \calU$
and such that $\psi|_{\calT} \colon \calT \to C \cap \calU$
is a diffeomorphism of manifolds with corners.
\end{Definition}
\noindent
The definition is justified by
\begin{proposition}\label{prop:unimod-emb}
Let $(M, \omega, \mu \colon M\to \fg^*)$ be a symplectic toric $G$-manifold.
Then the orbit space $M/G$ is a manifold with corners,
and the orbital moment map
$\bar{\mu}: M/G\to \fg^*$ is a unimodular local embedding.
\end{proposition}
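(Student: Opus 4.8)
The plan is to reduce this global statement to a local computation near a single orbit, using the equivariant symplectic slice theorem (the local normal form of Marle and of Guillemin–Sternberg) for Hamiltonian torus actions. Fix $m\in M$ and let $H=G_m$ be its stabilizer. Since $G$ is abelian and the action is Hamiltonian, every orbit is isotropic: for $\xi,\eta\in\fg$ one has $\omega(\xi_M,\eta_M)=-\eta_M\langle\mu,\xi\rangle=0$ because $\langle\mu,\xi\rangle$ is $G$-invariant. Hence $T_m(G\cdot m)\subset\ker d\mu_m=(T_m(G\cdot m))^{\omega}$, and the symplectic slice $V:=\ker d\mu_m/T_m(G\cdot m)$ carries a linear Hamiltonian $H$-action. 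The slice theorem then gives a $G$-invariant neighborhood of $G\cdot m$ equivariantly symplectomorphic to a neighborhood of the zero section in the model $G\times_H(\fh^{\circ}\times V)$, where $\fh^{\circ}\subset\fg^*$ is the annihilator of $\fh$ and $H$ acts trivially on $\fh^{\circ}$. Passing to $G$-orbit spaces identifies a neighborhood of $[m]$ in $M/G$ with a neighborhood of the origin in $(\fh^{\circ}\times V)/H$.

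Next I would pin down the slice representation. Writing $d=\dim G$ and $h=\dim H$, the orbit has dimension $d-h$ and is isotropic, so $\dim\ker d\mu_m=d+h$ and $\dim V=(d+h)-(d-h)=2h$. Diagonalizing the torus action gives $V\cong\C^{h}$ with weights $\alpha_1,\dots,\alpha_h\in\fh^*$. The key point is that effectiveness forces these weights to be a $\Z$-basis of the weight lattice of $H$: the homomorphism $H\to(S^1)^{h}$ assembled from the $\alpha_i$ is injective by effectiveness, hence (as $\dim H=h$) its differential is an isomorphism and the map is an isomorphism onto $(S^1)^h$. In particular $H$ is a subtorus and, in suitable coordinates, the $\alpha_i$ are the standard characters. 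Consequently $H\cong(S^1)^h$ acts on $V\cong\C^{h}$ in the standard way, and $(\fh^{\circ}\times V)/H\cong\fh^{\circ}\times\R_+^{h}\cong\R^{d-h}\times\R_+^{h}$. This is the standard corner chart, so $M/G$ is a manifold with corners.

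Finally I would read off the orbital moment map in these coordinates. With $t_i=\tfrac12|z_i|^2\ge0$ the slice moment map is $z\mapsto\sum_i t_i\alpha_i$, so in the chart $\fh^{\circ}\times\R_+^{h}$ the orbital moment map is the affine map $\bar{\mu}(\nu,t)=\mu(m)+\nu+\sum_i t_i\,\iota(\alpha_i)$, where $\iota:\fh^*\hookrightarrow\fg^*$ splits the restriction $\fg^*\to\fh^*$. Let $v_1,\dots,v_h\in\Z_G$ be the basis of the integral lattice of $H$ dual to $\{\alpha_i\}$, so that $\langle\iota(\alpha_i),v_j\rangle=\delta_{ij}$ while $\langle\nu,v_j\rangle=0$ for $\nu\in\fh^{\circ}$. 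Then $\langle\bar{\mu}(\nu,t)-\mu(m),v_j\rangle=t_j$, and since $(\nu,t)\mapsto\nu+\sum_i t_i\iota(\alpha_i)$ is a linear isomorphism $\fh^{\circ}\times\R^{h}\to\fg^*$, the map $\bar{\mu}$ is a diffeomorphism of the corner chart onto $C\cap\calU$, where $C=\{\eta\in\fg^*\mid\langle\eta-\mu(m),v_j\rangle\ge0,\ 1\le j\le h\}$ is precisely a unimodular cone with $\eps=\mu(m)$. This is the local model required in Definition~\ref{def:unimodular emb}, so $\bar{\mu}$ is a unimodular local embedding.

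I expect the main obstacle to be twofold. First, the effectiveness-to-unimodularity step must be made airtight: one has to see that the slice weights span the weight lattice rather than a proper finite-index sublattice, since this is exactly what distinguishes a smooth corner from an orbifold corner; here the hypothesis $\dim M=2\dim G$ does the work, via the injectivity argument above. Second, one must check that the corner charts produced at different orbits are smoothly compatible, so that they assemble into a genuine global manifold-with-corners structure on $M/G$ for which $\bar{\mu}$ is globally smooth. The first point is clean; the bulk of the careful (but essentially routine) verification lies in the second.
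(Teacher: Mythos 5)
The paper gives no proof of this proposition beyond citing \cite{KL}, and your argument is precisely the local normal form (Marle/Guillemin--Sternberg) argument carried out there: orbits are isotropic, the slice representation of the stabilizer $H$ is $\C^{\dim H}$ whose weights form a basis of the weight lattice by effectiveness, and the resulting chart $\fh^{\circ}\times\R_{+}^{\dim H}$ exhibits $\bar{\mu}$ as a diffeomorphism onto a neighborhood in a unimodular cone. Your proof is correct; the only implicit points to make explicit are that $M$ is connected (so that effectiveness of the $G$-action propagates to effectiveness of the $H$-action on the slice, which is what rules out orbifold corners) and that the smooth structure on $M/G$ is defined intrinsically via $G$-invariant functions, which renders the chart-compatibility issue you flag automatic.
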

\begin{proof}
See \cite{KL}.
\end{proof}
\noindent
We now introduce the category of symplectic toric manifolds over a
unimodular local embedding:

\begin{definition}[The category $\STM_\psi (W)$
of \emph{symplectic toric $G$-manifolds over $\psi \colon W\to \fg^*$}]
Let $\psi \colon W \to \fg^*$ be a unimodular local embedding of a
manifold with corners $W$ into the dual of the Lie algebra of a torus
$G$.

An object of the category  $\STM_\psi (W)$  is a pair
$((M,\omega, \mu: M\to \fg^*), \varpi \colon M\to W)$,
where $(M,\omega, \mu \colon M\to \fg^*)$ is a symplectic toric $G$ manifold
and $\varpi \colon M \to W$ is a quotient map 
for the action of $G$ on $M$ with $\mu = \psi \circ \pi$.

A morphism $\varphi$ from $((M, \omega, \mu \colon M \to \fg^*), \varpi
\colon M \to W)$ to $((M', \omega', \mu' \colon M'\to \fg^*), \varpi'
\colon M'\to W)$ is a $G$-equivariant symplectomorphism $\varphi
\colon M \to M'$ such that $\varpi' \circ \varphi = \varpi$.
\end{definition}

\begin{remark}
We may informally write $\varpi \colon M\to W$ or even $M$ for an object of
$\STM_\psi(W)$ and $\varphi \colon M\to M'$ for a
morphism between two objects.
\end{remark}

\begin{definition}[The category $\STB_\psi (W)$
of \emph{symplectic toric $G$-principal bundles over $\psi \colon W\to \fg^*$}]
  An object of $\STB_\psi (W)$ is a principal $G$-bundle $\pi
\colon P \to W$ equipped with a symplectic form $\sigma$ and a moment
map $\mu \colon P \to \fg^*$ such that $\mu = \psi \circ \pi$.  The
morphisms in this category are $G$-equivariant symplectomorphisms that
commute with the maps to $W$.
\end{definition}

\begin{remark} \label{trivial bundle}
The standard lifted action of a torus $G$ on its cotangent
bundle $T^*G$ makes $T^*G$ into a symplectic toric $G$-bundle
over the identity map $id \colon \fg^* \to \fg^*$.
If $\psi \colon W \to \fg^*$ is a unimodular local embedding,
the pullback of $T^*G \to \fg^*$ by $\psi$
gives a symplectic toric $G$ bundle over $\psi \colon W \to \fg^*$.
Thus, for any unimodular local embedding  $\psi \colon W \to \fg^*$,
the category $\STB (\psi \colon W\to \fg^*)$ is non-empty.
\end{remark}

\begin{remark}\label{remark:2.10}
Suppose $\pi: (P,\sigma) \to W$ is a symplectic toric $G$-bundle.
Then it is easy to check that for any connection 1-form $A$ on $P\to
W$ the 2-form
\[
\sigma - d \langle \psi \circ \pi, A\rangle
\]
is basic.  (Recall that $\langle \cdot, \cdot\rangle: \fg^*\times \fg
\to \R$ denotes the canonical pairing.)  Hence any symplectic 2-form
$\sigma$ on the symplectic toric $G$-principal bundle is of the form
\begin{equation} \label{eq:1}
\sigma =  d \langle \psi \circ \pi, A\rangle + \pi^*\beta
\end{equation}
for some connection 1-form $A$ on $P$ and a closed 2-form $\beta$ on
$W$.  Conversely, since $\psi$ is locally an embedding, the 2-form
$d\langle \psi \circ \pi, A\rangle$ is non-degenerate for any
connection 1-form $A$.  Hence all symplectic $G$-invariant forms on
$P$ so that $\psi\circ \pi$ is the corresponding moment map has to be
of the form \eqref{eq:1}.

\end{remark}

\section{A multiplication on the stack of symplectic toric principal
$G$-bundles.}

Fix a torus $G$ and a unimodular local embedding $\psi: W \to \fg^*$
of a manifold with corners $W$ into the dual of the Lie algebra of
$G$.
The following observation must be well known to experts.  If $\pi:P\to
W$, $\pi':P'\to W$ are two principal $G$-bundles over a manifold with
corners $W$, then their fiber product $P\times _WP'$ is a $G\times
G$-principal bundle over $W$.  Dividing out by the action of $G$ given
by
\[
g\cdot (p, p') = (g\cdot p, g\inv \cdot p')
\]
produces a principal $G$-bundle
\[
P\otimes P':= (P\times _WP')/G
\]
over $W$: the induced $G$-action on $P\otimes P'$ is given by
\[
a\cdot [p,p'] = [a\cdot p, p'] = [p,a\cdot p'],
\]
where $[p,p']$ denotes the orbit of $(p,p') \in P\times _W P'$ in
$P\otimes P'$.  Naturally if $P$ and $P'$ are principal bundles over
an open subset $U\subset W$ rather than the whole of $W$, then so is
$P\otimes P'$.  One can show that the  product $\otimes$ turns the gerbe
$BG$ over the site $\Op (W)$ of open subsets of $W$ into a Picard stack.

\begin{proposition} \label{prop:3.1}
For any open subset $U$ of $W$ and any two
symplectic toric principal $G$-bundles $\pi:(P,\sigma)\to U$,
$\pi':(P', \sigma')\to U$ over $U$ the tensor product $P\otimes P' \to
U$ is naturally a symplectic toric principal $G$-bundle.
\end{proposition}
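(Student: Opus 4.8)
The plan is to build the symplectic form on $P\otimes P'$ by descent from the fibre product $P\times_W P'$, and then to recognize the result as a form of type \eqref{eq:1} so that Remark \ref{remark:2.10} supplies non-degeneracy and the moment map condition for free. Write $q\colon P\times_W P'\to P\otimes P'$ for the quotient map, $\mathrm{pr}\colon P\times_W P'\to P$ and $\mathrm{pr}'\colon P\times_W P'\to P'$ for the two projections, and $\rho\colon P\times_W P'\to U$ for the projection to the base. The single observation that drives everything is that
\[
\mu\circ\mathrm{pr}=\psi\circ\pi\circ\mathrm{pr}=\psi\circ\rho=\psi\circ\pi'\circ\mathrm{pr}'=\mu'\circ\mathrm{pr}'
\]
as maps $P\times_W P'\to\fg^*$, since $\mathrm{pr}$ and $\mathrm{pr}'$ cover the same map $\rho$ to $U$.

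First I would form the closed $2$-form $\widetilde\sigma:=\mathrm{pr}^*\sigma+\mathrm{pr}'^*\sigma'$ on $P\times_W P'$ and show it is basic for the anti-diagonal $G$-action $g\cdot(p,p')=(g\cdot p, g\inv\cdot p')$ that defines $P\otimes P'$. Invariance is automatic, as $\widetilde\sigma$ is $(G\times G)$-invariant. For horizontality, the vector field generated by $\xi\in\fg$ is $(\xi_P,-\xi_{P'})$, and contracting gives $\mathrm{pr}^*(\iota_{\xi_P}\sigma)-\mathrm{pr}'^*(\iota_{\xi_{P'}}\sigma')=-d\langle\mu\circ\mathrm{pr},\xi\rangle+d\langle\mu'\circ\mathrm{pr}',\xi\rangle$, which vanishes by the identity above. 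Hence $\widetilde\sigma$ descends to a unique closed $2$-form $\sigma_{P\otimes P'}$ on $P\otimes P'$ with $q^*\sigma_{P\otimes P'}=\widetilde\sigma$. Note that this form is defined without any auxiliary choices, which is what makes the construction natural.

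Next I would compute the residual moment map and put $\sigma_{P\otimes P'}$ into the shape \eqref{eq:1}. The residual action $a\cdot[p,p']=[a\cdot p,p']$ is covered by $a\cdot(p,p')=(a\cdot p,p')$ on $P\times_W P'$, whose generator $(\xi_P,0)$ contracts $\widetilde\sigma$ to $-d\langle\psi\circ\rho,\xi\rangle$; thus $\psi\circ\rho$ is a moment map upstairs, and being basic it descends to $\psi\circ\pi_{P\otimes P'}$, giving the required identity. To see non-degeneracy, choose connection $1$-forms $A$ on $P$ and $A'$ on $P'$ and write $\sigma=d\langle\psi\circ\pi,A\rangle+\pi^*\beta$ and $\sigma'=d\langle\psi\circ\pi',A'\rangle+\pi'^*\beta'$ as in Remark \ref{remark:2.10}. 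The form $\mathrm{pr}^*A+\mathrm{pr}'^*A'$ is basic for the anti-diagonal action and descends to a connection $1$-form $A''$ on $P\otimes P'$ with $q^*A''=\mathrm{pr}^*A+\mathrm{pr}'^*A'$. A short computation using $\psi\circ\rho=\mu\circ\mathrm{pr}=\mu'\circ\mathrm{pr}'$ then shows $q^*\big(d\langle\psi\circ\pi_{P\otimes P'},A''\rangle+\pi_{P\otimes P'}^*(\beta+\beta')\big)=\widetilde\sigma$, so by uniqueness of descent
\[
\sigma_{P\otimes P'}=d\langle\psi\circ\pi_{P\otimes P'},A''\rangle+\pi_{P\otimes P'}^*(\beta+\beta').
\]
This is exactly a form of type \eqref{eq:1}, so Remark \ref{remark:2.10}, which uses that $\psi$ is a local embedding, guarantees it is non-degenerate.

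Putting these together, $(P\otimes P',\sigma_{P\otimes P'})$ is a principal $G$-bundle over $U$ carrying a symplectic form whose moment map is $\psi\circ\pi_{P\otimes P'}$, i.e., an object of $\STB_\psi(U)$. The step I expect to require the most care is the verification that the descended form is symplectic: descent only produces a closed, a priori possibly degenerate, $2$-form, and non-degeneracy is not visible from the fibre-product description. The device that resolves this, and the real content of the proof, is the identity $\mu\circ\mathrm{pr}=\mu'\circ\mathrm{pr}'$ on $P\times_W P'$, which simultaneously forces horizontality in the descent and lets the chosen connections recombine into $A''$, thereby matching $\sigma_{P\otimes P'}$ with \eqref{eq:1} where Remark \ref{remark:2.10} applies. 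One should also check that $A''$ is genuinely a connection form and that $\beta+\beta'$ is closed, but these are routine.
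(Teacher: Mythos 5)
Your proof is correct, and its core construction agrees with the paper's: both descend the form $\sigma+\sigma'$ from the fibre product $P\times_U P'$ to $P\otimes P'$ along the anti-diagonal $G$-action. Where you part ways is in how the two key points --- basicness and, above all, non-degeneracy of the descended form --- are established. The paper localizes (it assumes $\psi|_U$ is an embedding, so that $U\subset\fg^*$), observes that $P\times_U P'$ is exactly the zero level set of the moment map $\pi-\pi'$ for the anti-diagonal action on $(P\times P',\sigma+\sigma')$, and then invokes Marsden--Weinstein reduction: on a free zero level set the restricted closed form is basic and its kernel is precisely the orbit directions, so the quotient form is symplectic. You instead work globally, prove horizontality by the direct contraction computation using $\mu\circ\mathrm{pr}=\mu'\circ\mathrm{pr}'$, and obtain non-degeneracy by descending $\mathrm{pr}^*A+\mathrm{pr}'^*A'$ to a connection $A''$ on $P\otimes P'$ and recognizing the quotient form as one of type \eqref{eq:1}, to which Remark~\ref{remark:2.10} applies. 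The paper's route is shorter and purely conceptual; yours costs a computation but buys an explicit normal form for $\sigma\otimes\sigma'$ (connection $A+A'\mapsto A''$, basic part $\beta+\beta'$), which makes visible why $\otimes$ is additive on both the topological and the cohomological data of the bundles and feeds directly into the Picard-stack structure discussed afterwards. One pedantic point: Remark~\ref{remark:2.10} as literally written asserts only that $d\langle\psi\circ\pi,A''\rangle$ is non-degenerate; you are using the (true, and clearly intended) stronger statement that adding the basic closed term $\pi^*(\beta+\beta')$ cannot destroy non-degeneracy. This does hold, because the added term pairs only horizontal vectors while $d\langle\psi\circ\pi,A''\rangle$ pairs horizontal with vertical non-degenerately, but it deserves the one-line justification.
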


\begin{proof}
We claim that the restriction of the 2-form $\sigma +\sigma' \in
\Omega^2 (P\times P')$ to $P\times _W P'$ is basic with respect to the
action of $G$ given, as above, by $g\cdot (p, p') = (g\cdot p, g\inv
\cdot p')$
and descends to
a non-degenerate form on $P\otimes P'$.  Since this is a local claim,
we may assume that $\psi|_U$ is an embedding or, better yet, that
$U\subset \fg^*$. Then the moment map $\mu$ for the  action of
$G$ on $(P\times P', \sigma +\sigma')$ is
\[
\mu = \pi - \pi'.
\]
Hence
\[
\mu\inv (0)= \{(p,p')\in P\times P'\mid \pi (p) = \pi' (p') \} = P\times _U P'
\]
Since the closed 2-form $(\sigma + \sigma')|_{\mu\inv (0)}$ is
$G$-basic and is degenerate precisely in the directions of $G$-orbits,
it descends to a symplectic form $\sigma\otimes \sigma'$ on $P\otimes
P'$.  It is easy to check that the remaining $G = (G\times
G)/G$-action on $P\otimes P'$ makes $(P\otimes P', \sigma \otimes
\sigma')$ into a symplectic toric $G$-bundle over $U$.
\end{proof}

\begin{notation}
We write
\[
(P,\sigma)\otimes (P',\sigma') := (P\otimes P', \sigma \otimes
\sigma').
\]
\end{notation}

\begin{empt}\label{empt:3.3}
It is easy to extend $\otimes$ to arrows: if $f:(P_1, \sigma_1) \to
(P_1', \sigma_1')$ and $g:(P_2, \sigma_2) \to (P_2', \sigma_2')$ are
two arrows in $\STB_\psi (U)$, then $(f,g): (P_1\times P_2, \sigma_1 +
\sigma_2) \to (P_1'\times P_2', \sigma_1' + \sigma_2')$ descends to
a map $f\otimes g: (P_1\otimes P_2, \sigma_1 \otimes \sigma_2) \to
(P_1'\otimes P_2', \sigma_1' \otimes \sigma_2')$ of symplectic toric
$G$-principal bundles over $U$ for any $U\in \Op(W)$.
\end{empt}
\begin{empt}
It is tedious to check that the multiplication $\otimes$ defined above
gives rise to a structure of the Picard stack on the stack $\STB_\psi$
of symplectic toric principal $G$-bundles over $\Op (W)$.

For example, the canonical isomorphism
$(P\times P', \sigma + \sigma')
\to (P'\times P, \sigma'+\sigma)$ descends to a natural isomorphism
\[
\tau_{P,P'}:(P,\sigma)\otimes (P',\sigma') \to (P',\sigma')\otimes (P,\sigma).
\]
Similarly, the canonical isomorphism $(P_1 \times (P_2\times P_3),
\sigma_1 + (\sigma_2 + \sigma_3)) \to ((P_1 \times P_2)\times P_3,
(\sigma_1 + \sigma_2) + \sigma_3)$ together with a version of
reduction in stages produces a natural isomorphism
\[
\theta_{P_1,P_2,P_3}:
(P_1,\sigma_1)\otimes((P_2,\sigma_2)\otimes(P_3,\sigma_3)) \to
((P_1,\sigma_1)\otimes(P_2,\sigma_2))\otimes(P_3,\sigma_3)
\]
and so on.  The only possibly non-trivial claim is that for any open
set $U\in \Op (W)$ and any symplectic toric principal $G$-bundle
$(P,\sigma) \in \STB_\psi (U)$, the functor $(-)\otimes (P,\sigma):
\STB_\psi (U) \to \STB_\psi (U)$ of multiplication by $(P,\sigma)$ is
an equivalence of categories.  The proof of this claim is the same,
\emph{mutatis mutandis}, as the proof of Theorem~\ref{8-13-7} below.  
We omit it.
\end{empt}

\begin{proposition}\label{8-13-1}
Let $\pi: (P,\sigma) \to U$ be an object of $\STB _\psi (U)$.  A
Lagrangian section $s:U\to P$, if it exists, defines a natural
isomorphism $\s:id_U\Rightarrow (-) \otimes (P,\sigma)$ between the
identity functor $id_U$ on $\STB _\psi (U)$ and the functor $(-)
\otimes (P,\sigma)$ of multiplication by $(P,\sigma)$.
\end{proposition}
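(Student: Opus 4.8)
The plan is to use the Lagrangian section $s$ to manufacture, for each object $(Q,\sigma_Q)\in\STB_\psi(U)$, an explicit isomorphism $\s_Q\colon (Q,\sigma_Q)\to (Q,\sigma_Q)\otimes(P,\sigma)$, and then to verify that the collection $\{\s_Q\}$ is natural in $Q$. Writing $\pi_Q\colon Q\to U$ for the bundle projection, I would first define
\[
\tilde{s}_Q\colon Q\to Q\times_U P,\qquad \tilde{s}_Q(q)=\bigl(q,\,s(\pi_Q(q))\bigr),
\]
which indeed lands in the fiber product since $s$ is a section of $P\to U$. Composing with the quotient map $\mathrm{pr}\colon Q\times_U P\to Q\otimes P$ produces
\[
\s_Q\colon Q\to Q\otimes P,\qquad \s_Q(q)=\bigl[q,\,s(\pi_Q(q))\bigr].
\]

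I would then record that $\s_Q$ is a $G$-equivariant diffeomorphism covering $\mathrm{id}_U$. Equivariance is immediate from the formula $a\cdot[q,p]=[a\cdot q,p]$ for the $G$-action on $Q\otimes P$ together with the $G$-invariance of $\pi_Q$. Fiberwise bijectivity holds because, over $w\in U$, both fibers $Q_w$ and $P_w$ are $G$-torsors, so $q\mapsto[q,s(w)]$ identifies $Q_w$ with $(Q_w\times P_w)/G$; hence $\s_Q$ is an isomorphism of principal $G$-bundles over $U$.

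The crux is showing that $\s_Q$ is a symplectomorphism, and this is exactly where the \emph{Lagrangian} hypothesis is used. By the construction of $\sigma_Q\otimes\sigma$ in Proposition~\ref{prop:3.1}, the quotient map satisfies $\mathrm{pr}^*(\sigma_Q\otimes\sigma)=(\sigma_Q+\sigma)|_{Q\times_U P}$, so that
\[
\s_Q^*(\sigma_Q\otimes\sigma)=\tilde{s}_Q^*\bigl((\sigma_Q+\sigma)|_{Q\times_U P}\bigr)=\tilde{s}_Q^*\sigma_Q+\tilde{s}_Q^*\sigma.
\]
The first component of $\tilde{s}_Q$ is the identity of $Q$, giving $\tilde{s}_Q^*\sigma_Q=\sigma_Q$, while the second component is $s\circ\pi_Q$, giving $\tilde{s}_Q^*\sigma=\pi_Q^*(s^*\sigma)$. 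Since $s$ is a Lagrangian section and $\dim s(U)=\dim U=\dim G=\tfrac12\dim P$, its image is Lagrangian and so $s^*\sigma=0$; therefore $\s_Q^*(\sigma_Q\otimes\sigma)=\sigma_Q$. Together with the previous paragraph this shows $\s_Q$ is an isomorphism in $\STB_\psi(U)$.

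Finally, for naturality I would take an arrow $f\colon(Q,\sigma_Q)\to(Q',\sigma_{Q'})$ and check that $(f\otimes\mathrm{id}_P)\circ\s_Q=\s_{Q'}\circ f$. This is a one-line computation: $(f\otimes\mathrm{id}_P)(\s_Q(q))=[f(q),s(\pi_Q(q))]$ and $\s_{Q'}(f(q))=[f(q),s(\pi_{Q'}(f(q)))]$, and the two agree because $f$ commutes with the projections to $U$, i.e.\ $\pi_{Q'}\circ f=\pi_Q$. I expect the symplectomorphism verification---resting entirely on the identity $s^*\sigma=0$---to be the only substantive step; the remaining points are formal.
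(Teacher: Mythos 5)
Your proposal is correct and follows essentially the same route as the paper: the same formula $\s_Q(q)=[q,s(\pi_Q(q))]$, the observation that it is a bundle map which is symplectic because $s$ is Lagrangian, and the same one-line naturality computation using $\pi_{Q'}\circ f=\pi_Q$. Your explicit pullback calculation $\s_Q^*(\sigma_Q\otimes\sigma)=\sigma_Q+\pi_Q^*(s^*\sigma)=\sigma_Q$ merely spells out the step the paper states without detail.
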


\begin{proof}
Let $f:(Q_1, \tau_1)\to (Q_2,\tau_2)$ be an arrow in $\STB_\psi (U)$.
Consider the map $\s_{Q_i}: (Q_i, \tau_i) \to (Q_i, \tau_i) \otimes
(P,\sigma)$, $i=1,2$, is given by
\[
\s_{Q_i} (q) = [q, s (\pi_i (q))]
\]
where, as before, $[q, p] \in (Q_i\times _U P)/G$ the orbit of $(q,p)
\in Q_i\times _U P$ and $\pi_i:Q_i \to U$ denotes the projection. It
is clearly a map of principal $G$-bundles.  Moreover, since $s$ is
Lagrangian, $\s_{Q_i}$ is symplectic.  Recall that  $(f\otimes
id_P)([q,p]) = [f(q), p]$ by definition.  Hence
\[
\s_{Q_2} \circ f (q) = [f(q), s(\pi_2 (f(q))].
\]
Now, $\pi_2 (f(q)) = \pi _1 (q)$.  Therefore
\[
(f\otimes id_P)\circ s_{Q_1} (q) =
[f(q), s (\pi_1 (q))] = \s_{Q_2} \circ f (q).
\]
\end{proof}

\begin{remark}\label{NB}
Note that if $f:Q_1 \to Q_2$ is a map of principal $G$-bundles (not
necessarily symplectic!), then $f\otimes id_P: Q_1\otimes P \to Q_2
\otimes P$ is a map of principal bundles, and we still have
\begin{equation}\label{eq:*}
(f\otimes id_P)\circ s_{Q_1}  = \s_{Q_2} \circ f.
\end{equation}
\end{remark}

\begin{empt}
Note the global neutral object of $(\STB_\psi, \otimes, \tau, \theta)$
is the pullback $W\times _{\fg^*} T^*G$ of the canonical symplectic
toric principal $G$-bundle $T^*G \to \fg^*$ by the map $\psi:W\to
\fg^*$.
\end{empt}

We end the section with two technical observations that we will need later on.
\begin{lemma} \label{8-13-2}
If $V, U$ are two open sets in $W$ and $V\subset U$ is dense in $U$,
then the restriction functors $|_V: \STM_\psi (U)\to \STM_\psi (V)$ and
$|_V: \STB_\psi (U)\to \STB_\psi (V)$ are faithful.
\end{lemma}
\begin{proof}
If $V\subset U$ is dense, then for any $M\in \STM_\psi (U)$ the
restriction $M_V$ is dense in $M$.  Therefore, for any two arrows
$f,g:M\to M'$ in $\STM_\psi (U)$
\[
f|_{M|_V} = g|_{M|_V} \quad \textrm{implies that}\quad f =g.
\]
The proof for symplectic toric principal bundles is the same.
\end{proof}

\begin{lemma}\label{8-13-3}
Suppose an open subset $U$ of $ W$ is contractible.  Then
\begin{enumerate}
\item \label{8-13-3.i} All objects of $\STB_\psi (U)$ are isomorphic.
Consequently the stack $\STB_\psi$ is a gerbe.
\item \label{8-13-3.ii}Any object of $\STB_\psi (U)$ has a Lagrangian section.
\end{enumerate}
\end{lemma}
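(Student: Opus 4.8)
The plan is to prove (\ref{8-13-3.i}) by a Moser deformation argument and then to read off the gerbe property and (\ref{8-13-3.ii}) as formal consequences. First I would record that, $U$ being contractible, every principal $G$-bundle over $U$ is trivial, so given two objects $(P,\sigma)$ and $(P',\sigma')$ of $\STB_\psi(U)$ I may fix isomorphisms of principal bundles $P\cong U\times G\cong P'$ and regard $\sigma,\sigma'$ as two symplectic forms on the single bundle $\pi\colon U\times G\to U$, both having $\mu=\psi\circ\pi$ as moment map. Writing $\theta$ for the product (Maurer--Cartan) connection, Remark~\ref{remark:2.10} lets me write each form as $d\langle\mu,A\rangle+\pi^*\beta$; since any connection differs from $\theta$ by a basic form $\pi^*\alpha$ and $d\langle\mu,\pi^*\alpha\rangle=\pi^*d\langle\psi,\alpha\rangle$ is pulled back from $U$, I can absorb the connection into the closed $2$-form on $U$ and arrange
\[
\sigma=d\langle\mu,\theta\rangle+\pi^*\beta,\qquad
\sigma'=d\langle\mu,\theta\rangle+\pi^*\beta',
\]
with $\beta,\beta'$ closed. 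Then $\sigma'-\sigma=\pi^*(\beta'-\beta)$, and because $U$ is contractible $\beta'-\beta=d\gamma$ for some $\gamma\in\Omega^1(U)$, so $\sigma'-\sigma=d\rho$ with $\rho:=\pi^*\gamma$ a $G$-invariant \emph{basic} $1$-form.

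Next I would run Moser's method along $\sigma_t:=\sigma+t\,d\rho=d\langle\mu,\theta\rangle+\pi^*\beta_t$, where $\beta_t=(1-t)\beta+t\beta'$. Two points must be checked, and this is where the hypotheses are genuinely used. First, each $\sigma_t$ must be symplectic: since $\psi$ is a local embedding we have $\dim U=\dim\fg^*=\dim G=:n$, and in a local trivialization the functions $\psi_1,\dots,\psi_n$ are local coordinates on $U$ while the fibre coordinates are $x_1,\dots,x_n$, so that $d\langle\mu,\theta\rangle=\sum_i d\psi_i\wedge dx_i$ is Darboux and $\pi^*\beta_t$ involves only the $d\psi_i$; the sum is visibly non-degenerate, exactly as in the converse part of Remark~\ref{remark:2.10}. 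Second, the Moser vector field $X_t$, defined by $\iota_{X_t}\sigma_t=-\rho$, must be \emph{vertical} so that its flow is a bundle map over $U$. The main obstacle is precisely this verticality: pairing with a generating vector field $\xi_P$ gives $\langle d\psi(\pi_*X_t),\xi\rangle=\sigma_t(X_t,\xi_P)=-\rho(\xi_P)=0$ for all $\xi\in\fg$ because $\rho$ is basic, and injectivity of $d\psi$ then forces $\pi_*X_t=0$. Once this holds, $X_t$ is tangent to the compact fibres $\cong G$, so its flow $\phi_t$ is complete, $G$-equivariant (as $\sigma_t$ and $\rho$ are invariant) and fibre-preserving, and $\phi_1$ is the desired isomorphism $(P,\sigma)\to(P',\sigma')$ in $\STB_\psi(U)$.

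Having proved (\ref{8-13-3.i}), the gerbe assertion is formal: $\STB_\psi$ is locally non-empty by Remark~\ref{trivial bundle} (the pullback of $T^*G\to\fg^*$ exists over every open set), and, covering any open set by contractible ones, any two objects become locally isomorphic by (\ref{8-13-3.i}), which is exactly the definition of a gerbe.

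Finally, for (\ref{8-13-3.ii}) I would exhibit a Lagrangian section on the model $(U\times G,\,d\langle\mu,\theta\rangle)$ and transport it along the isomorphism from (\ref{8-13-3.i}). The constant section $s(u)=(u,e)$ satisfies $s^*\theta=0$, hence $s^*d\langle\mu,\theta\rangle=d\langle\mu\circ s,\,s^*\theta\rangle=0$, so it is Lagrangian. Given an arbitrary object $(P,\sigma)$, part (\ref{8-13-3.i}) provides a $\STB_\psi(U)$-isomorphism $\Phi\colon(P,\sigma)\to(U\times G,\,d\langle\mu,\theta\rangle)$; since $\Phi$ commutes with the projections and pulls the model form back to $\sigma$, the composite $\Phi\inv\circ s$ is a section of $P$ with $(\Phi\inv\circ s)^*\sigma=s^*d\langle\mu,\theta\rangle=0$, that is, a Lagrangian section of $P$.
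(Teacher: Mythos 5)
Your proof is correct, and its skeleton coincides with the paper's: both trivialize the bundle over the contractible $U$, use Remark~\ref{remark:2.10} to write the two forms as $d\langle\psi\circ\pi,A\rangle+\pi^*\beta$ and $d\langle\psi\circ\pi,A\rangle+\pi^*\beta'$ (your explicit absorption of the difference of connections into the closed $2$-form is a detail the paper elides), apply the Poincar\'e lemma to get $\sigma'-\sigma=\pi^*(d\gamma)$, and then exhibit the model Lagrangian section $s(x)=(x,1)$ for part (\ref{8-13-3.ii}). The one genuine divergence is the final step of (\ref{8-13-3.i}): the paper simply cites \cite[Lemma~5.10]{KL} for a gauge transformation $\phi$ with $\phi^*\sigma'=\sigma$, whereas you construct the isomorphism by hand via a Moser deformation along $\sigma_t=\sigma+t\,\pi^*(d\gamma)$, using the local embedding property of $\psi$ both to check non-degeneracy of $\sigma_t$ and to force the Moser field to be vertical (hence with complete, fibre-preserving, $G$-equivariant flow on the compact fibres). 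That verticality computation is exactly the right point to isolate, and it makes your argument self-contained where the paper outsources the work; the trade-off is that the cited lemma presumably packages the same (or an explicit gauge-theoretic) computation once and for all, so the paper's proof is shorter. One small point worth a sentence in a polished write-up: since $U$ may itself have corners, you should note that the Moser flow exists on the manifold with corners $P$ precisely because the vector field is vertical, hence tangent to every boundary stratum.
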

\begin{proof}
Since $U$ is contractible all principal $G$-bundles over $U$ are
trivial, hence isomorphic.  Therefore, given two objects
$\pi:(P,\sigma)\to U$, $\pi':(P', \sigma')\to U$ of $\STB_\psi (U)$
we may assume $P=P'$. By remark~\ref{remark:2.10}    $\sigma =
d\langle \psi \circ \pi, A\rangle + \pi^* \beta $ and $\sigma' =
d\langle \psi \circ \pi, A\rangle + \pi^* \beta '$ for a connection
1-form $A$ on $P$ and closed 2-forms $\beta, \beta'$ on $U$.  By
Poincare lemma $\beta -\beta' = d\gamma$ for some 1-form $\gamma$ on
$U$.  Hence
\[
\sigma' = \sigma +\pi^* (d\gamma).
\]
By \cite[Lemma~5.10]{KL} there exists a gauge transformation
$\phi:P\to P$ with $\phi^*\sigma' = \sigma$.  This proves
\ref{8-13-3.i}.

By \ref{8-13-3.i} we may assume $(P,\sigma) = (U\times G, d\langle
\psi \circ \pi, g\inv dg\rangle)$ where $g\inv dg $ denotes the
Maurer-Cartan form on the torus $G$.  Then $s:U\to U\times G$ given
by $s(x) = (x, 1)$ is a desired Lagrangian section.
\end{proof}

\section{An action of the stack of symplectic toric principal
bundles on the symplectic toric manifolds}

Next we construct an action of $\STB_\psi$ on $\STM_\psi$.  We first
observe that

\begin{lemma}
For any symplectic toric principal $G$-bundle $(\pi: (P,\sigma) \to U)\in
\STB_\psi (U)$ and any symplectic toric $G$-manifold
$(\varpi: (M,\omega) \to U)\in
\STM_\psi (U)$ over $U$, the fiber product
\[
  P\times _U M = \{(p,m)\mid \pi(p) = \varpi (m)\}
\]
is a manifold with a free action of $G$ given by $g\cdot (p,m) =
(g\cdot p, g\inv \cdot m)$.

Moreover, the form $\sigma + \omega|_{P\times _U M} $ descends to a $G$-invariant symplectic form $\sigma*\omega$ on the quotient
\[
P*M:=  (P\times _U M)/G
\]
making $(P*M, \sigma*\omega) \to U$ into a symplectic $G$-toric
manifold over $U$.
\end{lemma}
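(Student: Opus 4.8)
The plan is to follow the proof of Proposition~\ref{prop:3.1} almost verbatim, replacing one of the two principal bundles by the toric manifold $M$ and performing symplectic reduction at $0$ for the anti-diagonal $G$-action. Two of the assertions are elementary and do not require localization. First, the anti-diagonal action $g\cdot(p,m) = (g\cdot p, g\inv\cdot m)$ is free already on $P\times M$: if $g\cdot(p,m) = (p,m)$ then $g\cdot p = p$, and since $G$ acts freely on the principal bundle $P$ this forces $g = 1$; in particular the action is free on $P\times_U M$. Second, for smoothness I would trivialize $P$ locally, $P|_\calV \cong \calV\times G$; then $P\times_U M \cong G\times (M|_\calV)$, exhibiting $P\times_U M$ as a smooth manifold of dimension $\dim G + \dim M = 3\dim G$ carrying a free $G$-action.

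Next, exactly as in Proposition~\ref{prop:3.1}, the remaining assertions are local on $W$, so I may assume $\psi$ is an embedding, and indeed that $U\subset\fg^*$ with $\psi$ the inclusion. Then $\mu_P = \pi$ and $\mu_M = \varpi$ as $\fg^*$-valued maps, and the moment map for the anti-diagonal $G$-action on $(P\times M, \sigma+\omega)$, whose generator is $\xi\mapsto(\xi,-\xi)$, is $\mu = \pi - \varpi$. Consequently
\[
\mu\inv(0) = \{(p,m)\in P\times M \mid \pi(p) = \varpi(m)\} = P\times_U M.
\]
Since $G$ acts freely on $P$, the map $\mu_P = \pi$ is a submersion, hence so is $\mu$; thus $0$ is a regular value and $G$ acts freely on $\mu\inv(0)$. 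The closed $2$-form $(\sigma+\omega)|_{\mu\inv(0)}$ is therefore $G$-basic and degenerate precisely in the directions of the $G$-orbits, so by Marsden--Weinstein reduction it descends to a symplectic form $\sigma*\omega$ on $P*M = \mu\inv(0)/G$.

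Finally I would verify that $(P*M, \sigma*\omega)\to U$ is a symplectic toric $G$-manifold over $\psi$. The residual action of $G\cong(G\times G)/G$ is $a\cdot[p,m] = [a\cdot p, m] = [p, a\cdot m]$, and the reduction gives $\dim(P*M) = 3\dim G - \dim G = 2\dim G$, as required. The moment map of the residual action is induced by $\mu_P = \psi\circ\pi$, which on $\mu\inv(0)$ agrees with $\psi\circ\varpi$ and descends to $\psi$ composed with the projection $P*M\to U$, $[p,m]\mapsto\pi(p)$. For effectiveness I would compute stabilizers: $a\cdot[p,m] = [p,m]$ forces $(a\cdot p, m) = (g\cdot p, g\inv\cdot m)$ for some $g\in G$, whence $g\inv$ fixes $m$ and, using freeness on $P$, $a = g$; so the stabilizer of $[p,m]$ equals the stabilizer of $m$ in $M$, and effectiveness of the $G$-action on $P*M$ follows from that on $M$. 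Equivalently, in the local trivialization above one obtains a $G$-equivariant symplectomorphism $P*M|_\calV\cong M|_\calV$ over $\calV$, which makes all of these assertions transparent.

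The main obstacle is that the $G$-action on $M$ itself is \emph{not} free, as it has nontrivial isotropy over the corners of $W$, so one cannot directly invoke free reduction for $M$. The key point that overcomes this is that the anti-diagonal action is free thanks to the principal bundle factor $P$; standard free reduction then applies and produces a genuine symplectic toric manifold whose isotropy data reproduces exactly that of $M$.
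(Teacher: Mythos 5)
Your proof is correct and follows essentially the same route as the paper: localize so that $\psi$ is an embedding, realize $P\times_U M$ as the zero level set of the moment map $\pi-\varpi$ for the anti-diagonal action, and---this being the one genuine subtlety, which you handle by exhibiting the fiber product as the graph of the smooth map $\varpi$ inside a local trivialization $G\times (M|_\calV)$, where the paper instead writes out the explicit local normal form of $M$ and the map $\nu$---verify that this level set is a manifold \emph{without} corners before reducing. The only slip is the closing parenthetical: an arbitrary local trivialization of $P$ gives a $G$-equivariant \emph{diffeomorphism} $P*M|_\calV\cong M|_\calV$, which is a symplectomorphism only when the trivializing section is Lagrangian; since you invoke it only for the dimension count and effectiveness, this does not affect the argument.
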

\begin{proof}
This is similar to the proof of \ref{prop:3.1}.  The difference is
that it is not completely obvious that the fiber product $P\times _U
M$ is a manifold (without corners), since $P$ is a manifold with corners.

It is no loss of generality to assume that $G= \R^n/\Z^n$, $U\subset
\fg^* \simeq \R^n$ is of the form $U= \R^k_+ \times \R^\ell$ for some $k$,
$\ell$ with $k+\ell = n$, $P= U\times G$ with $\pi:U\times G\to U$
given by $\pi (p, q) = p$ and $M= \C^k\times \R^\ell \times
\R^\ell/\Z^\ell$ with $\varpi:M\to U$ given by $\varpi (z,\eta, \theta) =
(|z_1|^2, \ldots, |z_k|^2, \eta_1, \ldots, \eta_\ell)$.  Then
\[
 P\times _U M = \{(p,q, z,\eta, \theta)
\in U\times G \times \C^k\times \R^\ell \times \R^\ell/\Z^\ell
\mid
(p_1,\ldots, p_n) = (|z_1|^2, \ldots, |z_k|^2, \eta_1, \ldots, \eta_\ell)\}
\]
is the image of the map $\nu: G\times M \to P\times M = U\times G
\times M$ given by
\[
\nu (g, z, \eta, \theta) = \left((|z_1|^2, \ldots, |z_k|^2,
\eta_1, \ldots, \eta_\ell), g, (z, \eta, \theta)\right).
\]
Since for any coordinate function $f:U\times G \times M \to \R$ the
composite $f\circ \nu$ is smooth, $\nu$ is smooth.  And, in fact,
$P\times _U M$ is the product of the graph of a smooth map
\[
\lambda : \C^k \to \R^k_+,\quad \lambda (z) = (|z_1|^2, \ldots, |z_k|^2)
\]
with $G\times \R^\ell \times (\R^\ell/\Z^\ell)$.  Hence $\nu$ is an
embedding of a manifold without corners into a manifold with corners.
This proves that $P\times _U M$ is a manifold without corners.

The rest of the argument is the same as in Proposition~\ref{prop:3.1}.
\end{proof}
\begin{empt}
It is easy to see that if $\varpi: (M,\omega) \to W$ is a symplectic
toric manifold over $\psi:W\to \fg^*$ and $\pi:(P,\sigma) \to U$ is
a symplectic principal $G$ bundle over an open subset
$U\hookrightarrow W$ of $W$, then the fiber product $P\times _W M$
is a manifold and $P*M :=(P\times _W M)/G$ is naturally a symplectic
toric manifold over $U$.   Indeed it is $(P,\sigma) * (
(M,\omega)|_U)$.  We will denote it, by a slight abuse of notation,
by $(P,\sigma) * (M,\omega) = (P*M, \sigma*\omega)$.
\end{empt}
\begin{empt}
The map $*$ extends to arrows.  The pairs of arrows $f:(P,\sigma) \to
(P', \sigma')$ and $g:(M,\omega)\to (M',\omega')$ in $\STB_\psi(U)$
and $\STM_\psi (U)$ respectively define a $G\times G$ equivariant
symplectmorphism $(f,g):(P\times M, \sigma + \omega) \to (P'\times M',
\sigma' +\omega')$ which maps $P\times _U M$ to $P'\times _U M'$.
Hence it induces an arrow $f*g: (P,\sigma)*(M,\omega) \to (P',
\sigma')*(M',\omega')$ in $\STB_\psi (U)$.

We leave it to the reader to check that
\[
*: \STB_\psi (U) \times \STM_\psi (U) \to \STM_\psi (U)
\]
is a functor.  It is also not hard to check that $*$ commutes with
restrictions: for any $V, U $ open subsets of $W$ with $V\subset U$
the diagram
 \begin{equation}\label{eq:comm-w-restrictions}
\xymatrix{
 \STB_\psi (U) \times \STM_\psi (U)  \ar[r]^{\quad\quad   * } \ar[d]^{|_V}
&  \STM_\psi (U) \ar[d]^{|_V}  \\
\STB_\psi (V) \times \STM_\psi (V)  \ar[r]^{\quad\quad* }
&  \STM_\psi (V) }
\end{equation}
commutes.  Thus
\[
*: \STB_\psi  \times \STM_\psi  \to \STM_\psi
\]
is a strict map of presheaves of groupoids.
\end{empt}

We will not check all the details necessary to show that $*$ defines
an action of the Picard stack $\STB_\psi$ on the stack $\STM_\psi$.
Our goal is to prove

\begin{theorem}\label{thm:main}
The action $*$ of the Picard stack $\STB_\psi$ of symplectic toric
principal $G$-bundles over a unimodular local embedding $\psi:W\to
\fg^*$ on the stack $\STM_\psi$ of symplectic toric manifold over
$\psi$ makes $\STM_\psi$ into an $\STB_\psi$-torsor.  That is, the
functor
\[
a: \STB_\psi \times \STM_\psi \to \STM_\psi \times \STM_\psi, \quad
a((P, \sigma), (M,\omega)):= ((M,\omega), (P,\sigma)*
(M,\omega))
\]
is an isomorphism of stacks.
\end{theorem}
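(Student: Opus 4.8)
The plan is to prove that the functor $a$ is an isomorphism of stacks by exhibiting, at least locally, an inverse, and then checking that this local inverse is canonical enough to glue. Since $a$ is a map of stacks, and a map of stacks is an isomorphism if and only if it induces equivalences of groupoids $a_U$ on a basis of open sets (and one can check this on contractible opens by Lemma~\ref{8-13-3}), I would reduce the whole statement to the following local claim: for every contractible $U \in \Op(W)$, the functor
\[
a_U : \STB_\psi(U) \times \STM_\psi(U) \to \STM_\psi(U) \times \STM_\psi(U)
\]
is an equivalence of groupoids. Establishing this over contractible opens, together with the fact that $a$ is a strict map of presheaves commuting with restrictions (the diagram \eqref{eq:comm-w-restrictions} and its $\STM$-analogue), should suffice to conclude that $a$ is an isomorphism of the associated stacks.

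First I would unwind what the torsor condition means concretely. The functor $a$ sends $((P,\sigma),(M,\omega))$ to $((M,\omega),(P,\sigma)*(M,\omega))$; so proving $a_U$ is an equivalence amounts to showing that for every pair $(M, N)$ of objects of $\STM_\psi(U)$ there is, up to unique isomorphism, exactly one $(P,\sigma) \in \STB_\psi(U)$ with $(P,\sigma)*M \cong N$, compatibly in $M$. In other words I must produce a ``difference bundle'' $N \ominus M \in \STB_\psi(U)$ and show the assignment is essentially surjective and fully faithful. On a contractible $U$, Lemma~\ref{8-13-3} tells me all objects of $\STB_\psi(U)$ are isomorphic and every such object admits a Lagrangian section; I would use the explicit local normal form from the previous lemma (with $U = \R^k_+ \times \R^\ell$, $M = \C^k \times \R^\ell \times \R^\ell/\Z^\ell$) to compute $P * M$ directly and to construct the candidate inverse. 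The key algebraic facts driving the argument are that $*$ is associative and unital with respect to $\otimes$ — so that $(N \ominus M) * M \cong N$ — and that the neutral object $W \times_{\fg^*} T^*G$ acts as the identity; these are the ``reduction in stages'' isomorphisms alluded to earlier, which I would assume in the form needed.

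For essential surjectivity over contractible $U$ I would argue as follows: given $M, N \in \STM_\psi(U)$, both are classified by the cohomological data of \cite{KL}, and their ``difference'' lives in the same $\Z_G \times \R$-valued $H^2$ that classifies objects of $\STB_\psi(U)$; since $U$ is contractible this group vanishes, so $M \cong N$ and the trivial (neutral) bundle does the job. More structurally, and avoiding the classification, I would build $N \ominus M$ directly by a fiber-product-and-reduce construction dual to $*$, using a Lagrangian section of the relevant bundle to trivialize; Proposition~\ref{8-13-1} then supplies the natural isomorphism witnessing that multiplication by this bundle is the required equivalence. For full faithfulness I would use Lemma~\ref{8-13-2}: on a contractible $U$ one can pass to a dense open subset where everything is a product, and faithfulness of restriction to a dense open reduces equality of morphisms to a local computation, while surjectivity on morphisms follows from Proposition~\ref{8-13-1} and Remark~\ref{NB}, which show that $(-) * M$ and $(-) \otimes P$ carry natural isomorphisms transporting arrows.

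The main obstacle I expect is not the contractible-local computation but the \emph{gluing/descent} step: verifying that the locally defined inverses to $a_U$ assemble into an inverse morphism of stacks, i.e.\ that the difference-bundle construction $N \ominus M$ is functorial and compatible with restrictions up to coherent isomorphism, so that it descends. This is exactly where the stack (as opposed to presheaf) structure matters, and where the torsor axioms must be checked as equalities of functors rather than merely objectwise. Concretely, the hard part will be producing the coherence $2$-isomorphisms relating $(N \ominus M)$ on overlaps $U \cap U'$ and checking the cocycle conditions; I anticipate that these follow, \emph{mutatis mutandis}, from the associativity constraint $\theta$ and commutativity constraint $\tau$ of the Picard stack $\STB_\psi$ together with the compatibility of $*$ with $\otimes$, but this bookkeeping is the genuine content. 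Once the inverse is shown to be a well-defined map of stacks with $a^{-1} \circ a \cong \mathrm{id}$ and $a \circ a^{-1} \cong \mathrm{id}$ via these coherence data, the theorem follows, and the last bullet of the Introduction's main theorem — that a global object of $\STM_\psi$ trivializes the torsor and yields $\STB_\psi \cong \STM_\psi$ — is the standard consequence that a torsor with a global section is trivial.
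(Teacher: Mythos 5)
Your overall architecture (reduce to an equivalence of groupoids over a suitable cover, use $H^2(U,\Z_G\times\R)=0$ for essential surjectivity, use Lagrangian sections and density of the interior for morphisms) is recognizably the same local-to-global strategy as the paper, but the paper organizes it differently: it first proves Theorem~\ref{8-13-7}, that $F_M=(-)*M$ is an isomorphism of stacks for a single fixed $M$, and then deduces Theorem~\ref{thm:main} from that by purely formal manipulations (essential surjectivity, faithfulness and fullness of $a$ each follow from the corresponding property of $F_M$ in a few lines). This avoids entirely the step you flag as the hard part --- constructing a ``difference bundle'' $N\ominus M$ and gluing these local inverses with coherence $2$-isomorphisms. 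Because $F_M$ is a strict map of presheaves, the descent Lemma~\ref{8-13-6} upgrades local equivalences to a global isomorphism of stacks with no cocycle bookkeeping; you never need to build an inverse functor at all. I would strongly recommend that reorganization: it converts your anticipated ``genuine content'' into a non-issue.

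There are two concrete gaps in your local argument as sketched. First, fullness: you assert that surjectivity on morphisms ``follows from Proposition~\ref{8-13-1} and Remark~\ref{NB},'' but those only give you control over the dense interior $\intU$, where $M|_\intU$ is itself a principal bundle. To lift an arrow $f:P*M\to P*M$ to a gauge transformation $\tilde f:P\to P$ defined over all of $U$ (including the boundary strata, where $M$ is not a bundle), the paper invokes the Haefliger--Salem theorem to write $f(x)=h(\varpi(x))\cdot x$ for a smooth $h:U\to G$, defines $\tilde f(p)=h(\pi(p))\cdot p$ globally, and only then uses density of $\intU$ and the Lagrangian section to verify $\tilde f$ is symplectic. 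Without that input your argument produces a candidate $\tilde f$ only over $\intU$, and extending it (smoothly, equivariantly) across the corners is exactly the nontrivial point. Second, your reduction ``check on a basis of contractible opens'' is too coarse: the descent argument needs the double and triple intersections $U_{ij}$, $U_{ijk}$ of the cover to be contractible as well, and moreover the faithfulness/fullness lemmas require the \emph{interiors} $\intU$, $\intU_{ij}$, $\intU_{ijk}$ to be contractible (contractibility of $U$ does not imply contractibility of $\intU$ for a manifold with corners). The paper manufactures such a cover from a triangulation of $W$ compatible with its stratification; some such device is needed in your write-up too.
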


Theorem~\ref{thm:main} is an easy consequence of a seemingly weaker
result, which is of an independent interest:
\begin{theorem}\label{8-13-7}
For any symplectic toric manifold $\varpi:(M,\omega) \to W$ over a
unimodular local embedding $\psi:W\to \fg^*$ the functor
\[
F_M: \STB_\psi \to \STM_\psi,\quad F_M( (P,\sigma)):= (P,\sigma)
*(M,\omega)
\]
is an isomorphism of stacks.
\end{theorem}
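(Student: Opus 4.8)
The plan is to prove that $F_M \colon \STB_\psi \to \STM_\psi$ is an isomorphism of stacks by verifying the isomorphism locally and then invoking the fact that both source and target are stacks. Since a map of stacks is an isomorphism if and only if it is an equivalence of groupoids on each object of a generating family of the site, and since the contractible open subsets of $W$ generate $\Op(W)$, it suffices to show that for every contractible $U \in \Op(W)$ the functor $F_M \colon \STB_\psi(U) \to \STM_\psi(U)$ is an equivalence of categories. Thus the proof reduces to the local statement: \emph{$F_M|_U$ is fully faithful and essentially surjective whenever $U$ is contractible.}

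First I would treat essential surjectivity on a contractible $U$. By Lemma~\ref{8-13-3}\eqref{8-13-3.ii} the object $(M,\omega)|_U$ admits a Lagrangian section, and more to the point one can try to show that $F_M|_U$ sends the neutral object $W \times_{\fg^*} T^*G$ (restricted to $U$) to something isomorphic to $(M,\omega)|_U$; combined with a transitivity-type argument this should give essential surjectivity. Concretely, given any $\varpi' \colon (M',\omega') \to U$ in $\STM_\psi(U)$, I would produce a symplectic toric principal $G$-bundle $(P,\sigma)$ with $(P,\sigma) * (M,\omega) \cong (M',\omega')$. The natural candidate for $P$ is a quotient-type construction built from the fiber product of $M'$ and $M$ over $U$, divided by the diagonal $G$-action, mimicking the $\otimes$ and $*$ constructions; on the contractible $U$ one checks this is a principal bundle carrying the right symplectic form via the normal-form computation already used in the lemmas above.

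For fully faithful I would use the density and normal-form tools already in place. Faithfulness of $F_M|_U$ follows from the explicit description of $*$ on arrows together with freeness of the $G$-actions, much as in Lemma~\ref{8-13-2}: a morphism of bundles is determined by its effect after pairing with $M$. For fullness, given a morphism $F_M(P) \to F_M(P')$ in $\STM_\psi(U)$ I would construct the corresponding bundle morphism $P \to P'$ by working in the local model $G = \R^n/\Z^n$, $U = \R^k_+ \times \R^\ell$, $P = U \times G$, and $M = \C^k \times \R^\ell \times \R^\ell/\Z^\ell$ of the preceding lemma, where every object is a standard normal form and an equivariant symplectomorphism over $U$ can be read off and lifted; the section $\s$ of Proposition~\ref{8-13-1} and Remark~\ref{NB} provide the comparison that turns a map between the $*$-products into a map between the bundles.

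The main obstacle I expect is fullness, specifically lifting an equivariant symplectomorphism $\varphi \colon P * M \to P' * M$ over $U$ back to a morphism $P \to P'$ in a way that is natural in $U$ and compatible with the gluing needed to descend the local equivalences to a global isomorphism of stacks. The delicate point is that $M$ may have nontrivial isotropy over the corners of $W$, so the $*$-construction is not simply a free quotient in the fiber directions meeting the boundary, and one must check that the lift does not depend on choices (e.g.\ of Lagrangian section) up to the coherence isomorphisms $\tau, \theta$. I would handle this by first establishing the equivalence over the dense open locus where $G$ acts freely on $M$ — where the lift is unambiguous — and then using Lemma~\ref{8-13-2} (faithfulness under dense restriction) to conclude that the lift extends uniquely over all of $U$, so that fullness over the free locus forces fullness everywhere.
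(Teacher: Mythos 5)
Your overall architecture --- reduce to contractible open sets, use Lagrangian sections, exploit the dense interior --- is the same as the paper's, but two of the three local steps have genuine gaps.

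First, essential surjectivity. The ``division'' construction $(M'\times_U M)/G$ does not produce a principal $G$-bundle when both factors have isotropy over the corners, and no normal-form computation will repair this. Already for $G=S^1$, $W=U=\R_+$, $\psi$ the inclusion, and $M=M'=\C$ with $\varpi(z)=|z|^2$, the fiber product $\{(z,w)\in\C^2:\ |z|=|w|\}$ is the cone on a $2$-torus, hence not a manifold at the origin, and the residual $G$-action on the quotient (homeomorphic to $\C$) has a fixed point over $0\in\R_+$, so the result is not a principal bundle. The reason $P\times_U M$ is a manifold in the lemma preceding the theorem is precisely that one factor is a \emph{free} $G$-space; your candidate for $P$ has neither factor free. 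The paper avoids constructing $P$ altogether: for contractible $U$ one has $H^2(U;\Z_G\times\R)=0$, so by the classification theorem of \cite{KL} all objects of $\STM_\psi(U)$ are isomorphic, and essential surjectivity is immediate because the image of $F_M(U)$ is nonempty.

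Second, fullness. Your plan to ``establish the lift over the dense free locus, then use Lemma~\ref{8-13-2} to conclude that the lift extends uniquely'' conflates uniqueness with existence: faithfulness of restriction to a dense open set says an extension is unique \emph{if it exists}; it does not produce one, and producing the extension across the corners is exactly the hard point. The missing ingredient is \cite[Theorem~3.1]{HS}: every morphism $P*M\to P*M$ in $\STM_\psi(U)$ has the form $x\mapsto h(\varpi(x))\cdot x$ for a map $h\colon U\to G$ that is smooth up to the corners. This defines the gauge transformation $\tilde f(p)=h(\pi(p))\cdot p$ on all of $P$ at once; only then do the Lagrangian section over $\intU$, Proposition~\ref{8-13-1}, Remark~\ref{NB} and density serve to check that $\tilde f$ is symplectic, as in Lemma~\ref{8-13-5}. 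A smaller issue: your reduction to contractible $U$ skips the descent bookkeeping (Lemma~\ref{8-13-6} requires equivalences on double and triple intersections, arranged via a good cover coming from a triangulation of $W$), and the local lemmas also need the interiors $\intU$ to be contractible, which you would have to arrange or justify.
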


\begin{proof}[Proof of Theorem~\ref{thm:main} assuming \ref{8-13-7}]
To show that $a$ is essentially surjective, given $(M,\omega),
(M',\omega')\in \STM_\psi (U)$ we need to find $(P,\sigma) \in \STB_\psi (U)$ so
that
\begin{equation}\label{eq:thm:main}
(P,\sigma)*(M,\omega) \quad \textrm{is isomorphic to} \quad (M',\omega').
\end{equation}
By \ref{8-13-7}, the functor $F_M (U): \STB_\psi (U) \to \STM(U)$,
$(P,\sigma) \mapsto (P,\sigma)*(M,\omega)$ is essentially surjective.
Hence for any $(M',\omega') \in \STM_\psi (U)$ there is $(P,\sigma)\in
\STB_\psi (U)$ so that \eqref{eq:thm:main} holds.

Next suppose $(f,h), (g,k): ((P,\sigma), (M,\omega)) \to
((P',\sigma'), (M',\omega'))$ are two arrows in $\STB_\psi (U)\times
\STM_\psi (U)$ with
\[
a (f,h) = a (g,k).
\]
Then
\[
h=k \quad \textrm{and} \quad f*h = g*k.
\]
Hence
\[
g*id_M = (g*k)\circ (id_{P'}* h\inv) = (f*h)\circ  (id_{P'}* h\inv)=
f*id_M.
\]
By \ref{8-13-7} again, $g=f$.  Hence $a$ is faithful.

Finally we argue that $a$ is full.  Suppose we have an arrow $(h,f):
((M,\omega), (P*M,\sigma*\omega) ) \to ((M',\omega'), (P'*M',
\sigma'*\omega'))$ in $\STM_\psi (U) \times \STM_\psi (U)$. We want to
find an arrow $\tilde{f}:(P,\sigma)\to (P',\sigma')$ in $\STB_\psi
(U)$ with
\[
(h,f) = a (\tilde{f}, h) \equiv (h,\tilde{f}*h).
\]
Consider $(id_{P'} *h\inv)\circ f: P*M\to P'*M$.  By \ref{8-13-7},
there is an arrow $\tilde{f}:(P,\sigma) \to (P', \sigma')$ with
\[
\tilde{f}*id_M = (id_{P'} *h\inv)\circ f.
\]
Then
\[
f = (id_{P'} *h\inv)\inv \circ (\tilde{f}*id_M)
= (id_{P'} *h) \circ (\tilde{f}*id_M) = \tilde{f}*h.
\]
Thus $a$ is full.
\end{proof}

As a preparation for our proof of \ref{8-13-7} we prove a number of
lemmas.  But first,

\begin{notation}
Given a manifold with corners $W$ we denote its interior by  $\intW$.  For any open subset $U$ of $W$ we set
\[
\intU = U\cap \intW.
\]
\end{notation}

\begin{lemma}\label{8-13-4}
Suppose $U\in \Op (W)$ is such that $\intU$ is contractible.  Then the functor
\[
F_M (U): \STB_\psi (U) \to \STM_\psi (U),
(f:(P,\sigma) \to (P',\sigma')) \mapsto f*id_M
\]
is faithful.
\end{lemma}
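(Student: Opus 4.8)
The plan is to push the whole question into the interior $\intU$, where the symplectic toric manifold $M$ is actually a principal bundle, the action $*$ coincides with the multiplication $\otimes$, and the Lagrangian-section machinery of Proposition~\ref{8-13-1} applies directly.

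First I would record two reductions. The interior $\intU = U \cap \intW$ is dense in $U$ (a standard feature of manifolds with corners), so by Lemma~\ref{8-13-2} the restriction functor $|_{\intU} \colon \STB_\psi(U) \to \STB_\psi(\intU)$ is faithful. Hence, given arrows $f, g \colon (P,\sigma) \to (P',\sigma')$ in $\STB_\psi(U)$ with $f * id_M = g * id_M$, it suffices to prove that $f|_{\intU} = g|_{\intU}$. Second, over $\intU \subset \intW$ the orbital moment map $\psi$ takes values in the interiors of the relevant unimodular cones, so the $G$-action on $M|_{\intU}$ is free and $\varpi \colon M|_{\intU} \to \intU$ is itself a symplectic toric principal $G$-bundle; that is, $M|_{\intU} \in \STB_\psi(\intU)$, and over the interior $\STM_\psi(\intU)$ and $\STB_\psi(\intU)$ are the same category.

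Next, since $*$ commutes with restriction (diagram~\eqref{eq:comm-w-restrictions}), the hypothesis restricts to $(f|_{\intU}) * id_{M|_{\intU}} = (g|_{\intU}) * id_{M|_{\intU}}$. Because $M|_{\intU}$ is a principal bundle, $P * M$ and $P \otimes M$ are literally the same quotient $(P \times_{\intU} M)/G$, with the same induced maps on arrows, so this equality reads $(f|_{\intU}) \otimes id_{M|_{\intU}} = (g|_{\intU}) \otimes id_{M|_{\intU}}$. In other words, over $\intU$ the functor $F_M(\intU)$ is nothing but multiplication $(-) \otimes (M|_{\intU})$ by the fixed object $M|_{\intU}$. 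Now I would invoke contractibility: by \ref{8-13-3.ii} the contractible set $\intU$ carries a Lagrangian section $s \colon \intU \to M|_{\intU}$, so Proposition~\ref{8-13-1} supplies a natural isomorphism $\s \colon id_{\intU} \Rightarrow (-) \otimes (M|_{\intU})$. A functor naturally isomorphic to the faithful identity functor is faithful; concretely, naturality gives $f|_{\intU} \otimes id = \s_{P'|_{\intU}} \circ (f|_{\intU}) \circ \s_{P|_{\intU}}\inv$ with $\s$ invertible, so $(f|_{\intU}) \otimes id = (g|_{\intU}) \otimes id$ forces $f|_{\intU} = g|_{\intU}$. Faithfulness of restriction then yields $f = g$.

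I expect the only genuinely delicate point to be the verification that $M|_{\intU}$ really is a symplectic toric principal $G$-bundle, i.e.\ that the $G$-action is free over the interior and that the restricted form is the corresponding symplectic form; this is where the hypothesis that $\psi$ is a \emph{unimodular local embedding} is used. Everything else is bookkeeping that combines density (Lemma~\ref{8-13-2}), the coincidence of $*$ and $\otimes$ on principal bundles, and the Lagrangian-section isomorphism (Proposition~\ref{8-13-1}).
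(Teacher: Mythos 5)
Your proposal is correct and follows essentially the same route as the paper: restrict to the dense contractible interior $\intU$ where $M|_{\intU}$ becomes a symplectic toric principal $G$-bundle, use Lemma~\ref{8-13-3} to get a Lagrangian section and Proposition~\ref{8-13-1} to identify $F_M(\intU)$ with a functor naturally isomorphic to the identity (hence faithful), then conclude via the commutation of $F_M$ with restriction and the faithfulness of $|_{\intU}$ from Lemma~\ref{8-13-2}. The extra care you take in justifying that $M|_{\intU}$ is a principal bundle and that $*$ agrees with $\otimes$ over the interior is left implicit in the paper but is the same underlying argument.
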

\begin{proof}
Since $\intU$ is contractible, the symplectic toric principal
$G$-bundle $M|_\intU\to \intU$ has a Lagrangian section by
\ref{8-13-3}.  By \ref{8-13-1},
$F_M (\intU) = F_M(U)|_\intU:\STB_\psi (\intU) \to \STB_\psi (\intU) =
\STM_\psi (\intU)$ is isomorphic to the identity functor, hence is
faithful.  Since the diagram
\[
\xymatrix{ \STB_\psi (U)   \ar[r]^{F_M (U) }
\ar[d]^{|_\intU}
&  \STM_\psi (U) \ar[d]^{|_\intU}  \\
\STB_\psi (\intU)  \ar[r]^{F_M (U)|_\intU }
&  \STM_\psi (\intU) }
\]
commutes and since the functors $|_\intU$ are faithful by \ref{8-13-2},
the functor $F_M(U)$ is faithful as well.
\end{proof}

\begin{lemma}\label{8-13-5}
Let $U\in \Op(W)$ be contractible.  Suppose further that $\intU$ is
contractible as well.  Then the functor
\[
F_M (U):\STB_\psi (U)\to \STM_\psi (U)
\]
is full.
\end{lemma}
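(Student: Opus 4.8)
The plan is to use Lagrangian sections over $U$ to trivialize both sides of $F_M$, turning an arbitrary morphism into a single equivariant automorphism of $M$, and then to recognize that automorphism as coming from a gauge transformation $P\to P'$. Concretely, let $\varphi\colon (P,\sigma)*(M,\omega)\to (P',\sigma')*(M,\omega)$ be an arrow in $\STM_\psi(U)$. Since $U$ is contractible, Lemma~\ref{8-13-3} gives Lagrangian sections $s\colon U\to P$ and $s'\colon U\to P'$. Each section produces an isomorphism in $\STM_\psi(U)$: set $\Phi\colon (M,\omega)\to (P,\sigma)*(M,\omega)$, $\Phi(m)=[s(\varpi(m)),m]$, and define $\Phi'$ from $s'$ analogously. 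Freeness of the $G$-action on $P$ makes $\Phi$ a $G$-equivariant diffeomorphism over $U$, and $s^*\sigma=0$ forces $\Phi^*(\sigma*\omega)=\omega$, so $\Phi$ is symplectic. Put $\Psi:=(\Phi')\inv\circ\varphi\circ\Phi$, a $G$-equivariant symplectomorphism of $(M,\omega)$ over $U$. If $\Psi(m)=\gamma(\varpi(m))\cdot m$ for a smooth $\gamma\colon U\to G$, then I define $f\colon P\to P'$ by $f(a\cdot s(x))=a\gamma(x)\cdot s'(x)$; a direct computation gives $(f*id_M)\circ\Phi=\Phi'\circ\Psi=\varphi\circ\Phi$, hence $f*id_M=\varphi$, i.e. $F_M(U)(f)=\varphi$.

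The heart of the argument, and the step I expect to be the main obstacle, is the claim that the equivariant automorphism $\Psi$ is a gauge transformation with a gauge function that is \emph{smooth across the corners} of $U$. Over the dense open set $\intU$ the $G$-action on $M$ is free, so $\Psi(m)=\gamma(\varpi(m))\cdot m$ determines a smooth map $\gamma\colon\intU\to G$ uniquely; this already pins down any smooth extension. Smoothness of $\gamma$ across $\partial U$ is local, and I would check it in the normal form of \cite{KL} used earlier in this section: $M\cong\C^k\times\R^\ell\times(\R^\ell/\Z^\ell)$, $G=\R^k/\Z^k\times\R^\ell/\Z^\ell$, and $\varpi(z,\eta,\theta)=(|z_1|^2,\dots,|z_k|^2,\eta)$. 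Equivariance together with $\varpi\circ\Psi=\varpi$ forces $\Psi(z,\eta,\theta)=(z_1H_1,\dots,z_kH_k,\eta,\theta+\beta)$, where the $H_i$ and $\beta$ are smooth functions of $\varpi$ with $|H_i|\equiv 1$; the crucial input is that a smooth function on $\C^k$ transforming like $z_i$ under the rotation action is divisible by $z_i$ with smooth quotient, which is Schwarz's theorem on smooth invariants and equivariants. Writing $H_i=e^{2\pi i\alpha_i}$ and assembling $(\alpha,\beta)$ yields the smooth extension $\gamma\colon U\to G$; the various local extensions patch automatically because they all restrict to the canonical $\gamma$ on the dense set $\intU$.

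It remains to see that $f$ is symplectic, and hence an arrow of $\STB_\psi(U)$ rather than merely a bundle map. Here I would pass to the interior: since $\intU$ is contractible, $M|_{\intU}$ is a principal $G$-bundle carrying a Lagrangian section, so by Proposition~\ref{8-13-1} the functor $F_M(\intU)$ is naturally isomorphic to the identity, and in particular is full and faithful. The arrow $f_0$ over $\intU$ that it produces with $f_0*id_M=\varphi|_{\intU}$ is a genuine symplectic morphism; since also $(f|_{\intU})*id_M=(f*id_M)|_{\intU}=\varphi|_{\intU}$, faithfulness of $F_M(\intU)$ gives $f|_{\intU}=f_0$. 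Therefore $f^*\sigma'=\sigma$ holds on the dense set $\intU$, hence on all of $U$ by continuity, so $f\in\STB_\psi(U)$ and $F_M(U)(f)=\varphi$ (the faithfulness of $F_M(U)$ from Lemma~\ref{8-13-4} and density of $\intU$ could equally be used to transport the identity $f*id_M=\varphi$ from $\intU$ to $U$). This establishes that $F_M(U)$ is full.
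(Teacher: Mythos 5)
Your proof is correct, and its overall architecture matches the paper's: produce a bundle map $f\colon P\to P'$ inducing the given arrow, then verify $f$ is symplectic by restricting to the dense interior $\intU$, where a Lagrangian section of $M|_{\intU}$ and Proposition~\ref{8-13-1} (together with Remark~\ref{NB}, which is the clean way to justify your ``faithfulness'' step, since $f|_{\intU}$ is not yet known to be a morphism of $\STB_\psi(\intU)$ when you compare it with $f_0$) reduce everything to the identity functor. The two genuine differences are these. First, the paper does not trivialize $P$ and $P'$ by Lagrangian sections over all of $U$; it treats the automorphism case $\Hom(P*M,P*M)$ first, quoting Haefliger--Salem \cite[Theorem~3.1]{HS} to write $f(x)=h(\varpi(x))\cdot x$ for a smooth $h\colon U\to G$ directly on $P*M$, and then handles general $\Hom(P,P')$ in a second step by composing with a global isomorphism $P\cong P'$ (available since $U$ is contractible). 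Second, the step you correctly identify as the crux --- smoothness of the gauge function across the corners --- is exactly the content of the cited Haefliger--Salem theorem; your normal-form argument via divisibility of equivariant functions by $z_i$ and Schwarz's theorem on smooth invariants is in effect a self-contained reproof of that result in the toric case. What your route buys is independence from \cite{HS} at the cost of the local normal-form analysis (including the choice of a logarithm $H_i=e^{2\pi i\alpha_i}$, which needs contractibility or a patching argument); what the paper's route buys is brevity, since the hard analytic content is outsourced to a single citation and no explicit trivialization of $P$ and $P'$ is ever needed.
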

\begin{proof} \emph{Step 1.}
Let $f:P*M\to P*M$ be an arrow in $\STM_\psi (U)$ (to streamline our
notation we are no longer explicitly keeping track of the symplectic
forms). We will argue that there is  an arrow $\tilde{f}:P\to
P'$ in $\STB_\psi (U)$ with $\tilde{f}*id_M = f$.  By
\cite[Theorem~3.1]{HS} there is a smooth map $h:U\to G$ with
\[
f(x) = h(\varpi(x))\cdot x
\]
for all $x\in P*M$ (here, again, $\varpi:P*M\to U$ denotes the orbit
map).  Define a gauge transformation $\tilde{f}:P\to P$ of the
principal $G$ bundle $\pi:P\to U$ by
\[
\tilde{f}(p):= h(\pi (p) \cdot p.
\]
Then for any point $[p,m] \in P*M$ we have
\[
(\tilde{f} *id_M )[p,m] = [h(\pi (p))\cdot p, m] = h(\pi (p))\cdot [p,m]
= h(\varpi ([p,m]))\cdot [p,m] = f([p,m]).
\]
It remains to check that $\tilde{f}$ actually preserves the symplectic
form on $P$.  Since $P|_\intU$ is dense in $P$, it's enough to check
that $\tilde{f}|_\intU := \tilde{f}|_{P|_\intU}$ is symplectic.  Since
$\intU$ is contractible by assumption, the bundle $M|_\intU\to \intU$
has a Lagrangian section.  By \ref{8-13-1} and \ref{NB} we have a
symplectic $G$-equivariant diffeomorphism $\alpha :P|_\intU \to
(P*M)|_\intU = P|_\intU*M|_\intU$ so that the diagram
\[
\xymatrix{
 P|_\intU  \ar[r]^{\alpha} \ar[d]^{\tilde{f}}
&  P*M\,|_\intU \ar[d]^{f= \tilde{f}*id}  \\
P|_\intU  \ar[r]^{\alpha } &  P*M\,|_\intU }
\]
commutes (all maps are maps of principal $G$-bundles).  But we also
know that $\alpha$ and ${f}$ are \emph{symplectic}.  Hence
$\tilde{f}|_\intU$ is symplectic.  Therefore $\tilde{f}$ is
symplectic.

\emph{Step 2.} We now argue that $F_M (U):\Hom (P,P') \to \Hom (P*M, P'*M)$ is onto for any pair of symplectic toric principal $G$-bundles $P,P'$ over $U$.  Since $U$ is contractible, $P$ and $P'$ are isomorphic by \ref{8-13-2}.  Let $\psi:P\to P'$ denote this isomorphism.  Then for any $\phi\in \Hom (P*M, P'*M)$, the map $(\psi *id_M)\circ \phi $ is in $\Hom(P*M,P*M)$.  By \emph{Step 1},
\[
(\psi *id_M)\circ \phi  = \tau*id_M
\]
for some $\tau\in \Hom(P,P)$.   Hence
\[
\phi = (\psi\inv \circ \tau)*id_M,
\]
and we are done.
\end{proof}

\begin{notation}
Given an open cover $\{U_i\}$ we write $U_{ij}$ for the double
intersection $U_i\cap U_j$ and $U_{ijk}$ for $U_i\cap U_j\cap U_k$.
\end{notation}
\begin{lemma}\label{8-13-6}
Suppose $\calA$ and $\calB$ are two stacks over the site $\Op(W)$ of
open subsets of a manifold with corners $W$ thought of as strict
presheaves of groupoids and $F:\calA\to \calB$ a map of strict
presheaves, so that for any pair $U'\hookrightarrow U$ of open sets
the diagram
\[
\xymatrix{
\calA (U)   \ar[r]^{F(U) }
\ar[d]^{|_{U'}}
&  \calB (U) \ar[d]^{|_{U'}}  \\
\calA (U')  \ar[r]^{F (U') }
&  \calB (U')}
\]
commutes.  If for any $U\in \Op(W)$ there is a cover $\{U_i\}$ so that
$F(U_i):\calA(U_i)\to \calB (U_i)$, $F(U_{ij}): \calA(U_{ij})\to
\calB (U_{ij} )$ and $F(U_{ijk} ): \calA(U_{ijk})\to \calB (U_{ijk} )$
are all equivalences of categories for all $i,j,k$ then $F(U)$ is also
an equivalence of categories.  Hence $F:\calA\to \calB$ is an
isomorphism of stacks.
\end{lemma}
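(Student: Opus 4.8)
The plan is to prove that $F(U)$ is an equivalence of categories for every $U\in \Op(W)$; since $F$ commutes with restrictions, this at once upgrades to the statement that $F\colon \calA\to \calB$ is an isomorphism of stacks. I verify the three defining properties of an equivalence --- faithfulness, fullness, and essential surjectivity --- reducing each to the local hypotheses by means of the descent axioms for the stacks $\calA$ and $\calB$. Throughout I use that $F$ commutes with restriction (so $F(\xi)|_{U'}=F(\xi|_{U'})$ for any object or arrow $\xi$) and that every fiber $\calA(U')$, $\calB(U')$ is a groupoid, so that every arrow is invertible and being an isomorphism may be checked locally.

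\emph{Faithfulness and fullness.} Fix a cover $\{U_i\}$ of $U$ as in the hypothesis. For faithfulness, if $f,g\colon a\to b$ in $\calA(U)$ satisfy $F(f)=F(g)$, then restricting and using that each $F(U_i)$ is faithful gives $f|_{U_i}=g|_{U_i}$; the separation axiom for $\calA$ (arrows form a sheaf) then forces $f=g$. For fullness, given $\beta\colon F(a)\to F(b)$ in $\calB(U)$, fullness of each $F(U_i)$ lifts $\beta|_{U_i}$ to some $\alpha_i\colon a|_{U_i}\to b|_{U_i}$; on $U_{ij}$ the arrows $\alpha_i|_{U_{ij}}$ and $\alpha_j|_{U_{ij}}$ have the same image $\beta|_{U_{ij}}$ under the \emph{faithful} functor $F(U_{ij})$ and hence coincide, so the gluing axiom for arrows of $\calA$ produces $\alpha\colon a\to b$ restricting to the $\alpha_i$. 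Finally $F(\alpha)$ and $\beta$ agree on every $U_i$, whence $F(\alpha)=\beta$ by separation in $\calB$. These two steps use only the single and double intersections.

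\emph{Essential surjectivity.} This is where all three levels enter. Given $b\in \calB(U)$, essential surjectivity of each $F(U_i)$ yields objects $a_i\in\calA(U_i)$ together with isomorphisms $\phi_i\colon F(a_i)\to b|_{U_i}$. On $U_{ij}$ the composite $(\phi_j|_{U_{ij}})\inv\circ\phi_i|_{U_{ij}}$ is an isomorphism $F(a_i|_{U_{ij}})\to F(a_j|_{U_{ij}})$, which I lift through the fully faithful $F(U_{ij})$ to a (necessarily invertible, since an equivalence reflects isomorphisms) arrow $\theta_{ij}\colon a_i|_{U_{ij}}\to a_j|_{U_{ij}}$. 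The cocycle identity $\theta_{jk}\circ\theta_{ij}=\theta_{ik}$ over $U_{ijk}$ is verified by applying $F(U_{ijk})$, under which it becomes the evident identity $(\phi_k\inv\phi_j)(\phi_j\inv\phi_i)=\phi_k\inv\phi_i$, and then invoking faithfulness of $F(U_{ijk})$. Thus $(a_i,\theta_{ij})$ is descent data, and effective descent for the stack $\calA$ produces a global object $a\in\calA(U)$ with compatible isomorphisms $\psi_i\colon a|_{U_i}\to a_i$ satisfying $\psi_j|_{U_{ij}}=\theta_{ij}\circ\psi_i|_{U_{ij}}$. The arrows $\phi_i\circ F(\psi_i)\colon F(a)|_{U_i}\to b|_{U_i}$ are then seen to agree on each $U_{ij}$ --- a short computation using the compatibility of the $\psi_i$ with $\theta_{ij}$ and the relation $F(\theta_{ij})=(\phi_j|_{U_{ij}})\inv\circ\phi_i|_{U_{ij}}$ --- so the gluing axiom for arrows of $\calB$ assembles them into an isomorphism $F(a)\to b$, exhibiting $b$ in the essential image.

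\emph{The main obstacle.} The faithfulness and fullness arguments are routine sheaf-theoretic gluing and need only single and double overlaps; the real content is essential surjectivity, which is the sole place the triple intersections $U_{ijk}$ are used and the only place effective descent is invoked. The two delicate points there are (i) the cocycle condition for the transition isomorphisms $\theta_{ij}$, whose verification is precisely what forces the hypothesis that $F(U_{ijk})$ be an equivalence, and (ii) the bookkeeping of the directions of the $\theta_{ij}$ and the descent isomorphisms $\psi_i$, which must be kept consistent for the final comparison arrow $F(a)\to b$ to glue. I expect (i) to be the crux.
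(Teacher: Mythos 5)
Your proof is correct and follows essentially the same route as the paper: the paper simply replaces $\calA(U)$ and $\calB(U)$ by the descent categories $\Desc(\{U_i\},\calA)$ and $\Desc(\{U_i\},\calB)$ and asserts that the hypotheses make the induced functor between them an equivalence, which is exactly the faithfulness/fullness/effective-descent argument you carry out explicitly. Your write-up supplies the details (separation for faithfulness, arrow gluing for fullness, the cocycle verification via faithfulness of $F(U_{ijk})$ for essential surjectivity) that the paper's two-line proof leaves to the reader.
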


\begin{proof}
Since $\calA$ is a stack, for any $U\in \Op(U)$ and any cover
$\{U_i\}$ of $U$ the category $\calA (U)$ is equivalent to the descent
category $\Desc (\{U_i\}, \calA)$ defined by the cover $\{U_i\}$.  The
conditions on $F$ guarantee that the induced functor
\[
F: \Desc (\{U_i\}, \calA) \to \Desc (\{U_i\}, \calB)
\]
between the descent categories is an equivalence.  It follows that
$F(U)$ is an equivalence of categories.
\end{proof}

\begin{proof}[Proof of \ref{8-13-7}]
We need to show that for any open subset $U$ of $W$, the functor
\[
F_M (U): \STB_\psi (U) \to \STM _\psi (U)
\]
is an equivalence of categories.

If $U$ is contractible the cohomology $H^2
(U, \Z_G \times \R)$ is 0. Hence, by \cite[Theorem 1.8]{KL}, all objects
of $\STM_\psi (U)$ are isomorphic.  Therefore $F_M (U)$ is essentially
surjective.  Furthermore if $\intU$ is contractible, then $F_M (U)$ is
faithful by \ref{8-13-4}.  By \ref{8-13-5}, $F_M (U)$ is full.

We have seen \eqref{eq:comm-w-restrictions} that $F_M$ strictly
commutes with restrictions.  Now choose an open cover $\{U_i\}$ of $W$
s so that
\begin{itemize}
\item all sets $U_i$, $U_{ij} := U_i \cap U_j$ and
$U_{ijk} := U_i\cap U_j\cap U_k$ are contractible and
\item their interiors $\intU_i$, $\intU_{ij}$ and
$\intU_{ijk}$ are contractible as well.
\end{itemize}
This can be achieved, for example, by choosing a triangulation
\cite{Go,J} of the manifold with corners $W$ compatible with its
stratification into manifolds and using open stars of the
triangulation as the elements of the cover.  By \ref{8-13-6} the
functor $F_M$ is an isomorphism of stacks.
\end{proof}


\end{document}